\renewcommand{\L}{\mathcal{L}}
\newcommand{\R}{\mathcal{R}}
\newcommand{\F}{\mathcal{F}}
\newcommand{\N}{\mathcal{N}}
\newcommand{\Z}{\mathbb{Z}}
\numberwithin{equation}{section}
\newtheorem{theorem}{Theorem}[section]
\newtheorem{corollary}[theorem]{Corollary}
\begin{document}

\makeatletter
\def\imod#1{\allowbreak\mkern10mu({\operator@font mod}\,\,#1)}
\makeatother

\author{Ali Kemal Uncu}
   \address{Research Institute for Symbolic Computation, Johannes Kepler University, Linz. Altenbergerstrasse 69
A-4040 Linz, Austria}
   \email{akuncu@risc.jku.at}

\thanks{Research of the author is supported by the Austrian Science Fund FWF, SFB50-07 and SFB50-09 Projects.}

\title[\scalebox{.9}{A Polynomial Identity Implying Schur's Partition Theorem}]{A Polynomial Identity Implying Schur's Partition Theorem}
     
\begin{abstract} 
We propose and prove a new polynomial identity that implies Schur's partition theorem. We give combinatorial interpretations of some of our expressions in the spirit of Kur\c{s}ung\"oz. We also present some related polynomial and $q$-series identities.
\end{abstract}

\keywords{Schur's Partition Theorem; Integer partitions; $q$-Trinomial coefficients; $q$-Series}
  
\subjclass[2010]{05A15, 05A17, 05A19, 11B37, 11P83}

\date{\today}
   
   
\maketitle

\section{Introduction and background}\label{Sec_intro}

Since the Combinatory Analysis conference in honor of G. E. Andrews' birthday, in a series of papers Kur\c{s}ung\"oz presented his technique of writing generating functions for the number the partition functions with gap conditions on some classical partition theorems \cite{Kagan1, Kagan2, Kagan3}. His approach is backed with a combinatorial construction. This construction can be used to find finite analogs of these generating functions. Berkovich and the author \cite{BerkovichUncu7} have found finite analogs of the Capparelli's partition theorem related generating functions presented by Kanade--Russell and Kur\c{s}ung\"oz \cite{Kanade_Russell,Kagan1}. Comparing these polynomials with the earlier found finite analogs of Alladi--Andrews--Gordon and Berkovich and the author's \cite{AAG, BerkovichUncu1}, they listed polynomial identities that directly imply Capparelli's partition theorems \cite{BerkovichUncu7}. These polynomial identities led to many $q$-series relations involving the $q$-trinomial coefficients and, with the use of trinomial version of the Bailey lemma, proven infinite families of $q$-series identities in the spirit of the Andrews--Gordon Identities \cite{BerkovichUncu8, BerkovichUncu9}. Following the footsteps of \cite{BerkovichUncu7} and using other combinatorial arguments, the author presented other polynomial and $q$-series identities that are related with the classical partition theorems: namely the Euler, the Rogers--Ramanujan, the G\"ollnitz--Gordon, and the little G\"ollnitz theorems \cite{Uncu2}. It should be noted that Kur\c{s}ung\"oz also approached the G\"ollnitz--Gordon theorem \cite{Kagan1}, and the comparison of his construction versus the author equivalent formulas are discussed in \cite{Uncu2}. 

In this work, we will follow the footsteps of \cite{Kagan3, BerkovichUncu7, BerkovichUncu8, Uncu2} and present a new polynomial identity that directly implies Schur's partition theorem followed up with the study of some related $q$-series identities.

We define a \textit{partition} $\pi=(\lambda_1,\lambda_2,\dots)$ as a \textit{non-decreasing finite sequence of positive integers}, which are called parts of the partition $\pi$. We will use $\nu(\pi)$ and $|\pi|$ to denote the number of parts and the sum of all parts (size) of the partition $\pi$, respectively. The empty sequence $\emptyset$ is the only conventional partition with 0 parts and 0 size.

We start with an equivalent formulation of the Schur's partition theorem \cite{Schur}:

\begin{theorem}[Schur, 1926]\label{Schur_THM} For any non-negative integer $n$, the number of partitions of $n$ into distinct parts $\pm 1$ modulo $3$ is equal to the number of partitions of $n$, where the gap between parts is at least $3$ with the gap at least 6 if the parts are multiples of 3.
\end{theorem}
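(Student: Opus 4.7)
The plan is to prove Schur's theorem by establishing a polynomial identity whose two sides are finite $q$-series generating functions for the two partition counts appearing in Theorem \ref{Schur_THM}; passing to the limit then yields the theorem in the spirit of \cite{BerkovichUncu7, Uncu2}. Concretely, I would introduce a truncation parameter $N$ and two polynomials $D_N(x,q)$ and $S_N(x,q)$, with $x$ tracking the number of parts, designed so that $\lim_{N\to\infty} D_N(1,q)$ and $\lim_{N\to\infty} S_N(1,q)$ are the generating functions for partitions into distinct parts $\equiv \pm 1 \imod{3}$ and for partitions obeying the Schur gap conditions, respectively.

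The distinct-parts side is the easier of the two. A natural finitisation is obtained by truncating the product $\prod_{j\ge 1}(1+xq^{3j-2})(1+xq^{3j-1})$ at $j=N$, which gives an explicit polynomial. Expanding this product further as a doubly-indexed sum of $q$-binomial coefficients (according to how many parts are taken from each residue class) produces a form directly comparable with the gap-condition side.

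For the gap-condition side I would follow the construction of Kur\c{s}ung\"oz \cite{Kagan1, Kagan2, Kagan3}. Sort the parts of a Schur partition by residue modulo $3$ and, for each triple $(a,b,c)$ counting the parts in residue classes $1$, $2$, and $0$, identify the unique minimal \emph{base partition} that saturates all of the gap inequalities, including the stronger gap $\ge 6$ at multiples of $3$. Every Schur partition with profile $(a,b,c)$ is then obtained from its base by monotone forward moves of parts; each unit move contributes a factor of $q$, and the enumeration collapses into a $q$-trinomial-type sum over $(a,b,c)$, which, once capped at the $N$-th window, becomes the polynomial $S_N(x,q)$.

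The principal obstacle is twofold. First, the bases must be arranged so that the gap-$6$ condition at multiples of $3$ is enforced \emph{automatically} by the forward-move bookkeeping; this requires a careful interleaving of the three residue sub-sequences, and I expect the genuine novelty of the paper to reside in producing this parametrisation. Second, the polynomial identity $D_N(x,q)=S_N(x,q)$ must actually be proved: the cleanest route is to verify that both sides satisfy the same $q$-difference recurrence in $N$ with matching initial data, possibly mediated by $q$-trinomial transformations of the flavour exploited in \cite{BerkovichUncu7, BerkovichUncu8}; a combinatorial alternative would be to build an explicit weight-preserving bijection between the base/forward-move data and the choice of odd-residue parts. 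Once the polynomial identity is in hand, letting $N\to\infty$ and specialising $x=1$ immediately recovers Theorem \ref{Schur_THM}.
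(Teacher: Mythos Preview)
Your overall strategy --- polynomial identity plus limit --- is the paper's strategy, but the specific identity you propose is false at finite $N$. Truncating the product at $j=N$ gives $(-q,-q^2;q^3)_N$, whose $N=1$ instance is $(1+q)(1+q^2)=1+q+q^2+q^3$, whereas the generating function for Schur gap partitions with parts $\le 2$ is $1+q+q^2$; the finite product and the finite gap-side simply do not agree term by term (Schur's equality holds only in the limit). What the paper actually proves is a polynomial identity in which \emph{both} sides are generating functions for the gap-condition partitions with bounded largest part: the left side is the Kur\c{s}ung\"oz-style triple sum you describe, and the right side is Andrews' $q$-trinomial sum $\sum_j q^{j(3j-1)/2}\binom{N;j;q^3}{j}_2$. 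The link to the distinct-parts side is then imported from Andrews \cite{Andrews_Capparelli}, who showed separately that this $q$-trinomial expression tends to $(-q,-q^2;q^3)_\infty$ as $N\to\infty$. So the polynomial identity is ``gap $=$ gap (in two guises)'', and Schur's theorem follows only after invoking Andrews' limit.

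There is also a structural mismatch in your base-partition parametrisation. Classifying parts by residue classes $(a,b,c)$ does not collapse the forward moves into clean $q$-binomials, because a Schur partition can have parts of all three residues interleaved with gaps exactly $3$ only when the residues are $1$ or $2$. The paper's minimal configurations are indexed instead by $(n_1,n_2,m)$: an initial $1$-mod-$3$ chain of length $n_1$, then a $2$-mod-$3$ chain of length $n_2$, followed by $m$ singletons spaced exactly $4$ apart (so the singletons are \emph{not} tied to a fixed residue). Pairs are then split off the ends of the two chains and moved forward in steps that each contribute $q^6$ (not $q$), with explicit crossing rules over singletons and over the other chain; this is what produces the $q$-binomials in base $q^6$ with arguments $\lfloor n_i/2\rfloor$. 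Once you adopt that setup and replace your truncated product by the $q$-trinomial right-hand side, the recurrence-and-initial-values verification you outline is exactly how the paper finishes.
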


This classical example of congruence--gap partition theorem is well studied and there are many proofs \cite{AG, Alladi_Berkovich, Andrews_Schur1, Andrews_Schur2,Andrews_Capparelli,Bessenrodt, Bressoud}. Out of this long list of proofs, the first and the only polynomial identity that imply Theorem~\ref{Schur_THM} should be credited to Alladi--Berkovich \cite{Alladi_Berkovich}. 

Here we prove a new polynomial identity in the spirit of the polynomial identities that yield Capparelli's partition theorems \cite{BerkovichUncu7}. We will show that the following new polynomial identity implies Schur's partition theorem:

\begin{theorem}\label{Schur_poly_THM} For any fixed integer $N$, let $\N:= \N(m,n_1,n_2):= N-m-n_1-n_2$, then we have
\begin{align}
\label{Schur_poly_EQN}&\sum_{m,n_1,n_2\geq 0} q^{A(n_1,n_2,m)} {3\N\brack m}_q {\N +\left\lfloor \frac{n_1}{2}\right\rfloor \brack \left\lfloor \frac{n_1}{2}\right\rfloor}_{q^6} {\N +\left\lfloor \frac{n_2}{2}\right\rfloor \brack \left\lfloor \frac{n_2}{2}\right\rfloor}_{q^6}
= \sum_{j=-N}^N q^{\frac{j(3j-1)}{2}} {N;j;q^3\choose j}_2, 
\intertext{where}
\label{A_weight}&A(n_1,n_2,m):= \frac{(2m+n_1+n_2 +1)(2m+n_1+n_2 )}{2} + m(n_1+n_2)+ (n_1+n_2)^2 -n_1.
\end{align}
\end{theorem}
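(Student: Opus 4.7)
The plan is to prove \eqref{Schur_poly_EQN} by giving both sides a combinatorial meaning in the spirit of Kur\c{s}ung\"oz and then reconciling them through a recurrence argument. For the left-hand side, I would interpret each summand as counting Schur-legal partitions fitting in at most $N$ slots, stratified by three statistics: $m$ the number of parts that are multiples of $3$, and $n_1, n_2$ the numbers of parts $\equiv 1$ and $\equiv 2 \pmod{3}$. The exponent $A(n_1,n_2,m)$ should equal the size of a distinguished base partition with these statistics satisfying Schur's gap conditions, while the three Gaussian binomials encode independent upward shifts: ${3\N \brack m}_q$ accommodates the gap-$6$ moves between multiples of $3$ across a capacity of $3\N$, whereas the two $q^6$-binomials with floor-function upper parameters shift pairs of $\equiv 1$ (resp.\ $\equiv 2$) parts in units of $6$.

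For the right-hand side, I would invoke known Andrews--Baxter $q$-trinomial manipulations: the sum $\sum_{j=-N}^{N} q^{j(3j-1)/2}{N;j;q^3\choose j}_2$ should be a finite analogue of the Jacobi triple product, yielding in the limit $N\to\infty$ the generating function $\prod_{n\geq 1}(1+q^{3n-1})(1+q^{3n-2})$ for partitions into distinct parts $\not\equiv 0\pmod{3}$. At finite $N$, both sides can then be shown to satisfy the same $q$-difference equation in $N$ with identical initial data at $N=0$ (where both equal $1$): the LHS recurrence comes from iterating $q$-Pascal's rule on the Gaussian binomials, while the RHS recurrence is the standard three-term recursion for the trinomial coefficients. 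This is the style of argument used in \cite{BerkovichUncu7, Uncu2} for the Capparelli, Rogers--Ramanujan, and G\"ollnitz theorems, so an analogous scheme is natural here.

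The main obstacle, I expect, is the left-hand side bookkeeping: verifying that $A(n_1,n_2,m)$ is genuinely the size of a canonical Schur base partition, and that the three Gaussian binomials exhaust all Schur partitions without overlap. Multiples of $3$ in Schur's theorem interact with both other residue classes through the gap-$3$ rule (not just with each other through the gap-$6$ rule), and the factor $3\N$ rather than $\N$ in the first binomial must be carefully justified by this coupling; the $-n_1$ asymmetry in $A(n_1,n_2,m)$ likewise reflects a choice of canonical ordering between residue classes $1$ and $2$. If the combinatorial construction becomes unwieldy, a cleaner fallback is to verify the $q$-difference recurrences symbolically and check enough boundary data at small $N$, treating \eqref{Schur_poly_EQN} as a pure polynomial identity in $q$ and deferring the combinatorial picture to an independent corollary.
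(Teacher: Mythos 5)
Your ``fallback'' is in fact the paper's actual proof, so the viable core of your plan coincides with what is done there: one shows that both sides of \eqref{Schur_poly_EQN} satisfy the same recurrence in $N$ and checks initial conditions. Two caveats on how you describe this step. First, the recurrence the paper obtains for the left-hand sum (found and certified with the packages qMultiSum and Sigma) is of order \emph{four} in $N$, namely \eqref{Left_Sum_recurrence}, so agreement at $N=0$ alone is nowhere near enough; the paper verifies $\L_N(q)=\R_N(q)$ for $N=0,1,2,3$. Second, the right-hand side is not handled by ``the standard three-term recursion for the trinomial coefficients'': it is recognized as Andrews' function $\R_N(q)=S_{3N-1}(1,q)$ from \cite{Andrews_Capparelli}, which satisfies the second-order recurrence \eqref{Andrews_recurrence}, and one must iterate that recurrence on two of its own terms to show that $\R_N$ also satisfies the fourth-order recurrence obeyed by the left side. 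These points are fixable, but they are the actual content of the proof and your sketch leaves them open.

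Your primary (combinatorial) plan, by contrast, starts from a misreading of the statistics and would not go through as stated. In the paper's interpretation (Section~\ref{Sec_Combin}), $m$, $n_1$, $n_2$ are \emph{not} the numbers of parts in the residue classes $0$, $1$, $2$ modulo $3$: $n_1$ and $n_2$ are the lengths of initial chains of consecutive $1 \bmod 3$ and $2 \bmod 3$ parts in a minimal configuration, while the $m$ ``singletons'' start at a multiple of $3$ and are spaced $4$ apart, so their residues cycle through all classes and change under the forward moves. Correspondingly ${3\N\brack m}_q$ is a base-$q$ binomial generating partitions into at most $m$ parts each at most $3\N$ (unit moves of the singletons), not a gap-$6$ device for multiples of $3$. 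Stratifying Schur partitions by residue counts, as you propose, is a different and harder refinement and does not produce the stated summand. Note also that in the paper the combinatorics is developed separately and only interprets the left-hand side (Theorem~\ref{Ali_Schur_GF_BDD_THM} and Corollary~\ref{Cor1}); the right-hand side's interpretation is quoted from Andrews, and the polynomial identity itself is proved purely by the recurrence argument.
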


The rest of this paper is organized as follows. We start with the necessary definitions that appear in Theorem~\ref{Schur_poly_THM} and the rest of the paper in Section~\ref{Sec_Defs}. A direct proof of Theorem~\ref{Schur_poly_THM} is given in Section~\ref{Sec_Rec_Pf}. Section~\ref{Sec_Combin} has the combinatorial connection of Theorem~\ref{Schur_poly_THM} to Theorem~\ref{Schur_THM} showing that Theorem~\ref{Schur_poly_THM} implies Schur's Theorem. Some $q$-series and combinatorial identities of this study is discussed in Section~\ref{Sec_Tri}.

\section{Necessary Definitions and Some Useful Formulae}\label{Sec_Defs}

In this work, we will use the standard notations \cite{Theory_of_Partitions,Gasper_Rahman,Warnaar}. For variables $a$ and $q$ with $|q|<1$, we define the $q$-Pochhammer symbols and a useful abbreviation as: \begin{align}
\nonumber (a)_\infty:=(a;q)_\infty &:= (1-a)(1-aq)(1-aq^2)(1-aq^3)\dots,\\
\nonumber (a)_n:=(a;q)_n &:= \frac{(a)_\infty }{(aq^n;q)_\infty},\\
\nonumber (a_1,a_2,\dots,a_k;q)_n&:=(a_1)_n(a_2)_n\dots(a_k)_n.\\
\intertext{We note two well known properties of $q$-Pochhammer symbols:}
 \label{Factorial_difference} (a;q)_{n-k} &= \frac{(a;q)_n}{(\frac{q^{1-n}}{a};q)_k} \left(-\frac{q}{a}\right)^{k} q^{{k\choose 2}-nk},
 \intertext{and}
 \label{Factorial_1_over_q} (q^{-1};q^{-1})_n &= (-1)^n q^{-{n+1\choose 2}} (q;q)_n.
\intertext{Let $m,\ n,\ a,$ and $b\in\Z$, we define the $q$-binomial coefficients and the two types of $q$-trinomial coefficients as}
\label{qBinCoeff}{n+m\brack m}_q &:= \left\{ \begin{array}{cl}
\displaystyle\frac{(q)_{n+m}}{(q)_n(q)_m}, &\text{if }n\geq m\geq0,\\ [-1.5ex]\\
0, & \text{otherwise},
\end{array}\right.\\
\label{RoundqTrinomial} {m;b;q\choose a}_2 &:= \sum_{k\geq 0} q^{k(k+b)} {m\brack k}_q {m-k\brack k+a}_q=\sum_{k\geq0} \frac{q^{k(k+b)} (q)_m}{(q)_k(q)_{k+a}(q)_{m-2k-a}},\\
\intertext{and}
\label{TnqTrinomial} T_n{m;q\choose a}&:= q^{\frac{m(m-n)-(a(a-n)}{2}}{m;a-n;q^{-1}\choose a}_2,
\intertext{respectively. The following properties of $q$-binomial coefficients are well known:}
\label{Binom_limit}
 \displaystyle \lim_{n\rightarrow\infty}{n\brack m}_q &= \frac{1}{(q)_m},\\
\intertext{and}
\label{Binom_1_over_q}{n+m\brack m}_{q^{-1}} &= q^{-mn} {n+m\brack m}_{q}.
 \end{align}
 
 \section{Proof of Theorem~\ref{Schur_poly_THM}}\label{Sec_Rec_Pf}
 
 We start by noting that the right-hand side of \eqref{Schur_poly_EQN} is the $S_{3N-1}(1,q):=\R_N(q)$ function defined in \cite{Andrews_Capparelli} that Andrews originally used to prove Schur's theorem directly. In his proof, he shows that this object satisfies the recurrence relation
 \begin{align}
\label{Andrews_recurrence} \R_{N}(q)&= (1+q^{3N-2}+q^{3N-1})\R_{N-1}(q) +q^{3N-3}(1-q^{3N-3})\R_{N-2}(q). 
\intertext{We would like to note that this recurrence can directly be found and automatically proven using Symbolic Computation tools Sigma and qMultiSum \cite{Sigma, qMultiSum}. Same is true for the left-hand side sum of \eqref{Schur_poly_EQN}. Using the mentioned implementations, we first prove that the left-hand side summand, to be denoted by $\F_{N}(m,n_1,n_2)$, satisfies the recurrence}
\nonumber \F_N(m,n_1,n_2) &= \F_{N-1}(m,n_1,n_2) + q^{6N-5}F_{N-2}(m,n_1,n_2-1)+q^{6N-7}F_{N-2}(m,n_1-1,n_2)\\[-1.5ex]\nonumber\\
\label{Left_summand_recurrence}  &+ q^{3N-3}(1+q+q^2)\F_{N-2}(m-1,n_1,n_2) + q^{6N-8}(1+q+q^2)F_{N-3}(m-2,n_1,n_2)\\[-1.5ex]\nonumber\\
\nonumber &- q^{12N-24}\F_{N-4}(m,n_1-1,n_2-1) + q^{9N-15}F_{N-4}(m-3,n_1,n_2),
\intertext{then, by summing \eqref{Left_summand_recurrence} over $m,\ n_1,$ and $n_2$ from 0 to $\infty$, we see that the left-hand side sum $\L_N(q)$ satisfies the recurrence}
\nonumber\L_N(q) &= \L_{N-1}(q) + q^{3N-3}(1+q+q^2+q^{3N-4}+q^{3N-2})\L_{N-2}(q)\\[-1.5ex]\nonumber\\
\label{Left_Sum_recurrence}&\hspace{1cm}+q^{6N-8}(1+q+q^2)\L_{N-3}(q) + q^{9N-15}(1-q^{3N-9})\L_{N-4}(q).
\intertext{Using the recurrence \eqref{Andrews_recurrence} in an iterative fashion on its own terms \[(q^{3N-2}+q^{3N-1})\R_{N-1}(q)\hspace{.8cm}\text{and}\hspace{.8cm}q^{6N-6}\R_{N-2}(q)\] is enough to show that $\R_N(q)$ also satisfies \eqref{Left_Sum_recurrence}. Now that we established that both sides of \eqref{Schur_poly_EQN} satisfy the same recurrence \eqref{Left_Sum_recurrence}, the last task is to check confirm that the first four initial conditions of both sides are the same. For that we give the list:}
\nonumber\L_0(q)=\R_0(q) &= 1,\\
\nonumber\L_1(q)=\R_1(q) &= 1 + q + q^2,\\
\nonumber\L_2(q)=\R_2(q) &= 1 + q + q^2 + q^3 + q^4 + 2 q^5 + q^6 + q^7,\\
\nonumber\L_3(q)=\R_3(q)\\ 
\nonumber &\hspace{-1.3cm}= 1 + q + q^2 + q^3 + q^4 + 2 q^5 + 2 q^6 + 3 q^7+ 3 q^8 + 2 q^9 + 2 q^{10} + 2 q^{11} + 2 q^{12} + 2 q^{13} + q^{14} + q^{15}.
\end{align}
This proves the identity \eqref{Schur_poly_EQN} for any non-negative $N$. For negative values of $N$ both sides of \eqref{Schur_poly_EQN} is 0.

\section{Combinatorics of Theorem~\ref{Schur_poly_THM}}\label{Sec_Combin}

Let $n_1,\ n_2,$ and $m$ be non-negative integers and let the partition $\pi_{n_1,n_2,m}$, to be called \textit{minimal configuration}, be defined as $n_1$ consecutive 1 modulo 3 parts followed by $n_2$ consecutive 2 modulo 3 parts followed by $m$ parts that are exactly 4 apart from their neighboring parts. For positive $n_1,\ n_2,$ and $m$, we have \begin{align}\nonumber\pi_{n_1,n_2,m}:= (&\underline{1,4,7,\dots,3(n_1-1)+1},\ \underline{3n_1+2,3n_1+5,\dots, 3(n_1+n_2-1)+2},\\[-1.5ex]\nonumber\\\label{explicit_minimal_pttn} &\hspace{1cm}3(n_1+n_2)+3,3(n_1+n_2)+7,3(n_1+n_2)+11,\dots,3(n_1+n_2)+3+4(m-1)),\end{align} where we underline the \textit{initial chain} of the $n_1$ consecutive 1 mod 3 parts and also underline the following $n_2$ consecutive 2 mod 3 parts. We do not underline the $m$ 4-apart parts and call these parts \textit{singletons}. If $n_1,\ n_2,$ or $m$ is 0, in the \eqref{explicit_minimal_pttn} we ignore the related portion of the partition with these numbers. As an example, when $n_1=n_2=m=0$, we get an empty list (the unique partition of 0) as our minimal configuration. 

It is easy to see that the minimal configuration $\pi_{n_1,n_2,m}$ satisfies the gap conditions of the Schur Theorem (Theorem~\ref{Schur_THM}). Moreover, this partition has $n_1+n_2+m$ parts and its size is exactly $A(n_1,n_2,m)$ as in \eqref{A_weight}. The name minimal configuration comes from the fact that $\pi_{n_1,n_2,m}$ is the partition with the smallest size that satisfies the gap conditions of Theorem~\ref{Schur_THM} that has $n_1+n_2-2$ gaps of size exactly 3 into $n_1+n_2+m$ parts. 

We would like to start with such a minimal configurations and build up all partitions that satisfy Schur's gap conditions, bijectively. For that we will define ``the forwards motions of the parts'' of the minimal configurations first. This will be done in a similar fashion to \cite{Kagan1,Kagan2,Kagan3,BerkovichUncu7, Uncu2}, mostly resembling the lines of \cite{Uncu2}. 

Before presenting the details, we would like to summarize the way we will approach the forwards motions. First, we will move the singletons; starting from the largest singleton (greatest as an integer) to the smallest singleton. We will preserve the order of the singletons of $\pi_{n_1,n_2,m}$ by moving each part less than or equal to the amount of movement of the previous (greater) part. Then, we will define the motion of the 2 modulo 3 parts as pairs splitting from the end of the 2 modulo 3 initial chain of $\pi_{n_1,n_2,m}$. Once again, this motion will be done starting from the greatest pair (the order with respect to the sum of the pair's parts) to the smallest pair. We will maintain the ordering of the pairs by letting any pair to move at most the same amount as the previous pair that moved before it. We will define crossing over a singleton for these 2 mod 3 pairs, as these pairs may come close to a singleton that moved before any one of the pairs and may violate the Schur's gap conditions. Finally, we will define the motion of the 1 modulo 3 pairs in a similar fashion to the 2 modulo 3 pairs. In this case, we will need the additional treatment of a 1 modulo 3 pair crossing over consecutive 2 modulo 3 parts of the partition. All the defined motions will bijective maps and at each step we will make sure the outcome partition satisfies the Theorem~\ref{Schur_THM}'s gap conditions.

Starting from the largest part (the last part) we can move the $m$-singletons forwards by adding each element a non-negative value: $r_m$ to the largest part, $r_{m-1}$ to the second largest with $r_m\geq r_{m-1}$... $r_1$ to the smallest singleton $r_2\geq r_1\geq 0$. The order $0 \leq r_1\leq r_2\leq \dots \leq r_m$ is enough to ensure that order of the singletons are preserved after the motions. Such a list $(r_1,r_2,\dots,r_m)$ with $0 \leq r_1\leq r_2\leq \dots \leq r_m$ may not be a partition itself; some $r_i$ values might be 0. On the other hand, by ignoring the zero values, it is clear that every such list (used in the forwards motion of the singletons) corresponds to a unique partition into $\leq m$ parts. Therefore, the generating function that is related with the forwards motions of $m$ singletons is the generating function for the number of partitions into $\leq m$ parts:  \begin{equation}\label{singleton_move_GF}\frac{1}{(q)_m}.\end{equation} It is clear that the motions of the singletons are bijective and can easily be reversed. 

After moving the singletons, we start moving the initial chain of the $n_2$ 2 modulo 3 parts (signified by the underlining of all the related parts). In this motion we first split the last two elements of the initial chain, making them a pair (signified by under-braces) \begin{align*}&\underline{3n_1+2,3n_1+5,\dots, 3(n_1+n_2-2)+2,3(n_1+n_2-1)+2}\\[-1.5ex]\\&\hspace{1cm}\mapsto \underline{3n_1+2,3n_1+5,\dots, 3(n_1+n_2-3)+2}\underbrace{3(n_1+n_2-2)+2,3(n_1+n_2-1)+2}.\end{align*} Later we will start moving these pairs by moving one to the next possible location where the numbers again become a pair of consecutive 2 modulo 3 parts. Before doing so, note that we are splitting and moving two parts of an $n_2$ length initial chain together. Hence, we can at most split and move $\lfloor n_2/2 \rfloor$ pairs. In the motion of these pairs, similar to the singletons case, we will move the greatest pair (ordered with respect to sum of the parts in the pair) forwards the most, then the second largest pair less than the motion of the first pair etc. 

For a given pair $\underbrace{x,y}$ of $\pi$ that satisfies the gap conditions of Schur's theorem (Theorem~\ref{Schur_THM}), if $\pi$ does not have a part $z$ such that $y+3 \leq z < y+6$, we define the motion of this pair as 
\begin{equation}\label{pair_free_motion}                                                                                                                                              
\underbrace{x,y}\mapsto \underbrace{x+3,y+3}.
\end{equation} This forwards motion adds a total of 6 to the size of the partition $\pi$, the greater part of the pair moves 3 steps forwards, and it does not change the residue class of $x$ and $y$ modulo 3. Moreover, it is clearly bijective and can be undone. 

There might be a $z$ value that is in 4 or 5 distance to the larger part of the pair that we would like to move. This forwards motions needs us to define particular bijective rules so that the outcome partition would still satisfy the gap conditions of Schur's theorem. Given a pair $\underbrace{3k+2, 3k+5}$, we define the following bijective rules for crossing singletons. Similar to adjustments explained in \cite{Kagan3}, we need to handle different cases differently. These cases will depend on the number of singletons that one pair needs to cross in a given circumstance:
\begin{itemize}
\item{Case 1: } If the pair is crossing a single singleton (that is $\leq 6$ distant to the pair and it is more than 6 distant to the following larger part (if any), we define the following bijective forwards motion. For $r = 0,1,2$, we have
\begin{equation}
 \label{jump_over_singletons}\underbrace{3k+2, 3k+5},3k+8+r  \mapsto 3k+2+r, \underbrace{3k+8, 3k+11}.
 \end{equation}
\item{Case 2: } If the pair is crossing two close singletons (a singleton followed by another singleton that is $\leq5$ distant) where employing a case of the \eqref{jump_over_singletons} would break the Schur's gap conditions, we use the following bijective motions. For $r,s\in \{0,1,2\}$ with $s-r\leq 1$, we define the motions:
\begin{equation}
\label{jump_over_doubletons} \underbrace{3k+2, 3k+5},3k+8+r,3k+12+s  \mapsto 3k+2+r,3k+6+s, \underbrace{3k+11, 3k+14}.
\end{equation}
Notice here that the $(r,s)=(2,3)$ possibility is excluded although this can be considered as two close singletons. That is because in this case we can use \eqref{jump_over_singletons} with $r=2$ and this would not break the Schur's gap conditions.
\item{Case 3:} If the pair to move needs to cross three close singletons, and if employing the motions \eqref{jump_over_singletons} or \eqref{jump_over_doubletons} is violating the gap conditions of Theorem~\ref{Schur_THM}. Let $r,s,t\in\{0,1\}$ with $s-r\leq 1$ and $t-s\leq 1$ then we have define the bijective motion:
\begin{equation}
\label{jump_over_tripletons} \underbrace{3k+2, 3k+5},3k+8+r,3k+12+s,3k+16+t  \mapsto 3k+2+r,3k+6+s,3k+10+t, \underbrace{3k+14, 3k+17}.
\end{equation}
Observe that a possible part of the partition (if any) that follows the part $3k+16+t$ in \eqref{jump_over_tripletons} is at least of size $3k+20+t$. The gap between the largest part of our last motion \eqref{jump_over_tripletons} $3k+17$ has at least a gap of 3 with this possible part $3k+20+t$. Therefore, one can stop the crossing of the pairs over singletons here. This also means one can stop defining particular rules here as well. If they would like to move the pair $\underbrace{3k+14, 3k+17}$ once again, they can start with checking and employing the bijective motion rules \eqref{pair_free_motion}-\eqref{jump_over_tripletons}.
\end{itemize}
Hence, the list of motions \eqref{pair_free_motion}, \eqref{jump_over_singletons}, \eqref{jump_over_doubletons}, and \eqref{jump_over_tripletons} is the full bijective list of motions for the $\lfloor n2/2 \rfloor$ 2 modulo 3 pairs. Furthermore, each of these motions add 6 to the total size of the partition once employed. Recalling that a pair can move at most the same amount as the previous pair is enough to see that the generating function related with the motions of the $\lfloor n2/2 \rfloor$ 2 modulo 3 pairs is in bijection with the partitions into $\leq \lfloor n2/2 \rfloor$ parts. The generating function for the forwards motions of the 2 modulo 3 initial chain is \begin{equation} \label{n2_move_GF}
\frac{1}{(q^6;q^6)_{\lfloor n2/2 \rfloor}}.
\end{equation}
Also, observe that in all these motions the pairs move \begin{equation}\label{Num_of_forwards_motions_in_crossing_singletons}3+ 3\times\text{``the number of singletons crossed"}\end{equation} steps forwards.

Finally, we move on to the motions starting from the initial chain of the $n_1$ 1 modulo 3 parts. Similar to the previous case, we first split the last two elements of the initial chain, making them a pair (signified by under-braces) \begin{align*}&\underline{1,4,7,\dots,3(n_1-3)+1,3(n_1-2)+1,3(n_1-1)+1}\\[-1.5ex]\\&\hspace{1.5cm}\mapsto \underline{1,4,7,\dots,3(n_1-3)+1},\underbrace{3(n_1-2)+1,3(n_1-1)+1}.\end{align*} Similar to the previous (2 modulo 3 initial chain) case we can split and move at most $\lfloor n_1/2\rfloor$. Moreover, \eqref{pair_free_motion} is still valid for this case, and for the rest of the crossing rules all one needs to do is to use the same cases related to \eqref{jump_over_singletons}-\eqref{jump_over_tripletons} and subtract 1 from each and every term in these motions. All the size and number of forward motion observations that is made for the 2 modulo 3 pairs are still valid for the 1 modulo 3 pairs.

One new situation in this case appears if a 1 modulo 3 pair comes close to a group of consecutive 2 modulo 3 parts of the partition. In this situation, we define the following bijective map. Let $l\geq 3$ be the number of consecutive 2 modulo 3 parts, then
\begin{align}
\nonumber \underbrace{3k+1,3k+4}&,3k+8, 3k+11,\dots, 3k+3l+5\\
\label{jump_over_2mod3_chain}&\mapsto 3k+2, 3k+5, \dots, 3k+3l-1, \underbrace{3k+3l+4,3k+3l+7}.
\end{align}
Note that $l=1$ and $2$ cases are covered under the relative versions of \eqref{jump_over_singletons} and \eqref{jump_over_doubletons} for the 1 modulo 3 pairs. Moreover, note that in this forwards motion the pair makes $l$ extra motions and again the size of the overall partition raises only by 6. By the same argument as the previous case now we can see that the generating function corresponding to the forwards motion of the 1 modulo 3 initial chain is \begin{equation}
\label{n1_move_GF} \frac{1}{(q^6;q^6)_{\lfloor n_1/2\rfloor}}.
\end{equation}

Combining \eqref{singleton_move_GF}, \eqref{n2_move_GF} and \eqref{n1_move_GF}, it is easy to see that \begin{equation}
\frac{q^{A(n_1,n_2,m)}}{(q^6;q^6)_{\lfloor n_1/2\rfloor}(q^6;q^6)_{\lfloor n_2/2\rfloor}(q)_m}
\end{equation} is the generating function for all the partitions that satisfies the gap conditions of Theorem~\ref{Schur_THM} that can be constructed from the minimal configuration $\pi_{n_1,n_2,m}$ defined in \eqref{explicit_minimal_pttn}, where $A(n_1,n_2,m)$ is as defined in \eqref{A_weight}. By summing over all possible $n_1, n_2,$ and $m$ we get the following theorem.

\begin{theorem}\label{Ali_Schur_GF_THM} Let $A(n_1,n_2,m)$ be as defined in \eqref{A_weight}, then
\begin{equation}\label{Ali_Schur_GF}
\sum_{n_1,n_2,m\geq 0} \frac{x^{n_1+n_2+m}q^{A(n_1,n_2,m)}}{(q^6;q^6)_{\lfloor n_1/2\rfloor}(q^6;q^6)_{\lfloor n_2/2\rfloor}(q)_m}
\end{equation}
is the generating function for the number of partitions that satisfy the gap conditions of Schur's theorem (Theorem~\ref{Schur_THM}), where the exponent of $x$ counts the number of parts of the counted partitions.
\end{theorem}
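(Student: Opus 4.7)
The plan is to assemble the generating function \eqref{Ali_Schur_GF} directly from the bijective ingredients developed earlier in this section. Since the minimal configuration $\pi_{n_1,n_2,m}$ from \eqref{explicit_minimal_pttn} has exactly $n_1+n_2+m$ parts and size $A(n_1,n_2,m)$, each triple $(n_1,n_2,m)$ contributes a base weight of $x^{n_1+n_2+m}q^{A(n_1,n_2,m)}$. Starting from this configuration, one then applies the three families of forwards motions in the prescribed order: the $m$ singletons first, then the pairs split off from the $2\bmod 3$ initial chain, and finally the pairs split off from the $1\bmod 3$ initial chain. The three corresponding generating factors \eqref{singleton_move_GF}, \eqref{n2_move_GF}, and \eqref{n1_move_GF} account for the choices of motion data and yield the product summand of \eqref{Ali_Schur_GF}.

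The key point that still needs verification, and the main obstacle, is that summing over $(n_1,n_2,m)$ enumerates every Schur-admissible partition exactly once. For this I would construct the inverse procedure explicitly. Given a Schur-admissible partition $\pi$, one first identifies from the top down the maximal suffix of singletons (parts separated by gap at least $4$), retracting them as far as possible while preserving their order; this pins down $m$ together with the monotone data $0\le r_1\le\dots\le r_m$. Next one locates the topmost block of consecutive $2\bmod 3$ parts at gap exactly $3$ in what remains and retracts split pairs one by one via the reverses of \eqref{pair_free_motion}--\eqref{jump_over_tripletons}. Finally one does the same for $1\bmod 3$ pairs, invoking also the reverse of \eqref{jump_over_2mod3_chain}. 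Each of the rules \eqref{pair_free_motion}--\eqref{jump_over_tripletons} and \eqref{jump_over_2mod3_chain} has been set up as a weight-preserving local bijection between Schur-admissible patterns, so the reversibility of each individual step is immediate; the nontrivial work is to confirm that the branching choice at every step (which crossing case applies, when to stop retraction, how far to split an initial chain) is uniquely forced by the shape of $\pi$ and remains compatible with the monotonicity constraints on the motion data for singletons and for the pairs.

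Once this inverse is in hand, summing over all non-negative $n_1,n_2,m$ turns the triple sum in \eqref{Ali_Schur_GF} into the generating function for Schur-admissible partitions. The tracking of the variable $x$ is automatic: the minimal configuration supplies the factor $x^{n_1+n_2+m}$ and none of the forwards motions, whether of singletons or of $1\bmod 3$ or $2\bmod 3$ pairs, changes the number of parts of the partition.
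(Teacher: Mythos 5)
Your proposal follows essentially the same route as the paper: weight the minimal configuration $\pi_{n_1,n_2,m}$ by $x^{n_1+n_2+m}q^{A(n_1,n_2,m)}$, multiply by the three motion generating factors \eqref{singleton_move_GF}, \eqref{n2_move_GF}, \eqref{n1_move_GF}, and sum over $(n_1,n_2,m)$, with the bijectivity of the individual motions \eqref{pair_free_motion}--\eqref{jump_over_tripletons} and \eqref{jump_over_2mod3_chain} carrying the argument. The paper likewise asserts (rather than fully writes out) the reversibility and uniqueness of the overall decomposition, so your explicit flagging of the inverse procedure as the point needing verification is, if anything, slightly more careful than the source.
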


The triple series \eqref{Ali_Schur_GF} is the analogue of the double sums presented for the G\"ollnitz--Gordon and little G\"ollnitz theorems in \cite{Uncu2}. This series (as well as the ones in \cite{Uncu2}) are inspired by Kur\c{s}ung\"oz's recent works \cite{Kagan1,Kagan2,Kagan3}.  Due to the difference in the minimal configuration setups and some of the motions, the author and Kur\c{s}ung\"oz gets equivalent but different representations for the same generating functions. Here we present Kur\c{s}ung\"oz's version of the generating function represented in Theorem~\ref{Ali_Schur_GF_THM}.

\begin{theorem}[Kur\c{s}ung\"oz, 2018]\label{Kagan_Schur_GF_THM} Let \begin{equation}\label{K_weight}K(n_1,n_2,m):= 6(n_1+n_2)^2+2m^2+6m(n_1+n_2)-n_1+n_2-m,\end{equation} then
\begin{equation}\label{Kagan_Schur_GF}
\sum_{n_1,n_2,m\geq 0} \frac{x^{2n_1+2n_2+m}q^{K(n_1,n_2,m)}}{(q^6;q^6)_{n_1}(q^6;q^6)_{n_2}(q)_m}
\end{equation}
is the generating function for the number of partitions that satisfy the gap conditions of Schur's theorem (Theorem~\ref{Schur_THM}), where the exponent of $x$ counts the number of parts of the counted partitions.
\end{theorem}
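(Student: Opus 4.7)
The plan is to mirror the combinatorial construction of Section~\ref{Sec_Combin}, but starting from a different minimal configuration due to Kur\c{s}ung\"oz in \cite{Kagan3}, in which Schur-admissible parts are grouped into $m$ singletons and $n_1+n_2$ \emph{doublets} (adjacent pairs rather than the longer chains used for Theorem~\ref{Ali_Schur_GF_THM}). I would first exhibit a base partition $\tilde{\pi}_{n_1,n_2,m}$ built from $m$ four-apart singletons, $n_1$ pairs of consecutive $1$ modulo $3$ parts, and $n_2$ pairs of consecutive $2$ modulo $3$ parts, assembled in the order that minimises total size subject to Schur's gap conditions. A routine arithmetic check then confirms that $\tilde{\pi}_{n_1,n_2,m}$ has $2n_1+2n_2+m$ parts and size exactly $K(n_1,n_2,m)$, matching the monomial $x^{2n_1+2n_2+m} q^{K(n_1,n_2,m)}$ appearing in the summand of \eqref{Kagan_Schur_GF}.

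From this base I would install three stages of forwards motion, each modelled on a stage of Section~\ref{Sec_Combin}. First, the $m$ singletons are displaced by a weakly-increasing sequence $0\leq r_1\leq \cdots\leq r_m$, producing the factor $1/(q)_m$. Second, the $n_1$ pairs of $1$ modulo $3$ parts are moved rigidly in multiples of three starting from the largest pair and decreasing, contributing $1/(q^6;q^6)_{n_1}$. Third, the $n_2$ pairs of $2$ modulo $3$ parts are moved in the same fashion, contributing $1/(q^6;q^6)_{n_2}$. Because the doublets in Kur\c{s}ung\"oz's setup are never amalgamated into longer chains (the key structural difference with Ali's setup), the floor functions drop out and the denominators appear indexed by $n_1$ and $n_2$ directly. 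Multiplying the three factors against $q^{K(n_1,n_2,m)}$ and summing over $n_1,n_2,m\geq 0$ reproduces \eqref{Kagan_Schur_GF}.

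The main obstacle is engineering bijective crossing rules for the pair motions, since each pair that is moved forwards must pass over either singletons or an opposite-residue chain while preserving Schur's gap condition at every intermediate step. These rules play exactly the role of \eqref{pair_free_motion}--\eqref{jump_over_2mod3_chain} but must be re-derived for the new minimal configuration, and the bookkeeping verifying exhaustion (that every Schur-admissible partition arises uniquely from some $\tilde{\pi}_{n_1,n_2,m}$ via a composition of the three classes of motions) is lengthy. Since Kur\c{s}ung\"oz has already worked out this combinatorial model in \cite{Kagan3}, the most efficient route for the present paper is to quote his construction. A purely algebraic alternative is to derive Theorem~\ref{Kagan_Schur_GF_THM} from Theorem~\ref{Ali_Schur_GF_THM} by splitting the left-hand indices $n_1,n_2$ of \eqref{Ali_Schur_GF} into their even and odd components $n_i = 2N_i + \varepsilon_i$ with $\varepsilon_i\in\{0,1\}$, rewriting $A(n_1,n_2,m)$ in terms of $N_1,N_2,\varepsilon_1,\varepsilon_2,m$, and matching the resulting reorganised sum term by term with the right-hand side of \eqref{Kagan_Schur_GF}.
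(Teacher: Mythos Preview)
The paper does not prove Theorem~\ref{Kagan_Schur_GF_THM}; it is stated as Kur\c{s}ung\"oz's result and attributed to~\cite{Kagan3}. Your proposal correctly identifies this and your suggested route of quoting his construction is exactly what the paper does.

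One caution about your ``purely algebraic alternative'': the paper goes out of its way, immediately after stating Theorem~\ref{Kagan_Schur_GF_THM}, to observe via \eqref{exponent_difference} that $A(2n_1,2n_2,m)-K(n_1,n_2,m)=2m$, precisely to warn that the even/odd split of \eqref{Ali_Schur_GF} does \emph{not} match \eqref{Kagan_Schur_GF} term by term. What the split actually yields is the four-term identity of Theorem~4.3, not a direct derivation of Theorem~\ref{Kagan_Schur_GF_THM} from Theorem~\ref{Ali_Schur_GF_THM}. So that alternative, as you have phrased it, would not close.

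A minor point on your combinatorial sketch: in the paper's own construction (and, one expects, in Kur\c{s}ung\"oz's) the order of the pair motions is reversed relative to what you wrote---the $2$ modulo $3$ pairs move before the $1$ modulo $3$ pairs, because the latter must be equipped with the extra crossing rule \eqref{jump_over_2mod3_chain} to pass over chains of $2$ modulo $3$ parts already in place.
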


To avoid any speculative trivial transformation between \eqref{Ali_Schur_GF} and \eqref{Kagan_Schur_GF} please note that \begin{equation}\label{exponent_difference}A(2n_1,2n_2,m)-K(n_1,n_2,m) = 2m.\end{equation} We would also like to present the equality of the series \eqref{Ali_Schur_GF} and \eqref{Kagan_Schur_GF} after doing even-odd splits for the variables $n_1$ and $n_2$ and regrouping in \eqref{Ali_Schur_GF}. We will also be using \eqref{exponent_difference} to write the $q$-factors in the summands using the same quadratic $K(n_1,n_2,m)$. 

\begin{theorem} We have
\begin{align}
\nonumber\sum_{n_1,n_2,m\geq 0}& \frac{x^{2n_1+2n_2+m}q^{K(n_1,n_2,m)+2m}}{(q^6;q^6)_{n_1}(q^6;q^6)_{n_2}(q)_m} (1+xq^{6n_1+6n_2+3m+1}+xq^{6n_1+6n_2+3m+2}+x^2q^{12n_1+12n_2+6m+6})\\&=
\sum_{n_1,n_2,m\geq 0} \frac{x^{2n_1+2n_2+m}q^{K(n_1,n_2,m)}}{(q^6;q^6)_{n_1}(q^6;q^6)_{n_2}(q)_m},
\end{align} where $K(n_1,n_2,m)$ is as in \eqref{K_weight}.
\end{theorem}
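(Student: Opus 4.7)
The plan is to derive the identity by performing an even--odd split on the indices $n_1$ and $n_2$ in the author's generating function \eqref{Ali_Schur_GF}, and then matching the resulting four pieces against the four summands of the factor in parentheses on the left-hand side. Write each original index as $n_i = 2a_i$ or $n_i = 2a_i+1$, so that $\lfloor n_i/2 \rfloor = a_i$ in every case and the $q$-Pochhammer denominators $(q^6;q^6)_{\lfloor n_i/2\rfloor}$ become $(q^6;q^6)_{a_i}$, matching those in the left-hand side.

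Next, I would compute $A(n_1,n_2,m)$ in each of the four parity cases and express the excess over $K(a_1,a_2,m)+2m$. The key input is \eqref{exponent_difference}, which handles the all-even case $(n_1,n_2) = (2a_1,2a_2)$ and contributes the constant $1$ in the parenthetical factor. For $(2a_1+1,2a_2)$ a direct expansion of $A$ using the formula \eqref{A_weight} should give an excess of $3m+6a_1+6a_2+1$ over $K+2m$, plus one extra power of $x$ from $n_1+n_2+m = 2a_1+2a_2+m+1$, producing the term $xq^{6n_1+6n_2+3m+1}$. The case $(2a_1,2a_2+1)$ is analogous and accounts for the $xq^{6n_1+6n_2+3m+2}$ term (the only change being the $-n_1$ in $A$ flipping sign contribution). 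Finally, the all-odd case $(2a_1+1,2a_2+1)$ yields the excess $6m+12a_1+12a_2+6$ and two extra powers of $x$, producing the $x^2q^{12n_1+12n_2+6m+6}$ term. After renaming $a_i \mapsto n_i$ and summing, this exhibits the left-hand side as exactly \eqref{Ali_Schur_GF}.

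At this point I invoke Theorem~\ref{Ali_Schur_GF_THM} and Theorem~\ref{Kagan_Schur_GF_THM}: both sums are generating functions for partitions satisfying Schur's gap conditions with $x$ tracking the number of parts, so they are equal. The right-hand side of the claimed identity is precisely \eqref{Kagan_Schur_GF}, and the argument is complete.

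The only potentially tricky step is the bookkeeping in the exponent computation: one must verify that the three quadratic components of $A$, namely $\frac{(2m+n_1+n_2+1)(2m+n_1+n_2)}{2}$, $m(n_1+n_2)$, and $(n_1+n_2)^2$, recombine in each parity case to produce $K(a_1,a_2,m)+2m$ plus the correct linear shift. This is routine but requires some care with the half-integer pieces coming from the triangular-number term, which must cancel in pairs after the split. No other obstacle is expected, since once the exponents align the denominators match automatically.
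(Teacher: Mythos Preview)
Your proposal is correct and follows exactly the route the paper indicates just before stating the theorem: perform the even--odd split on $n_1,n_2$ in \eqref{Ali_Schur_GF}, use \eqref{exponent_difference} to rebase the exponents on $K(a_1,a_2,m)+2m$, and then identify the two sides via Theorems~\ref{Ali_Schur_GF_THM} and~\ref{Kagan_Schur_GF_THM}. Your case-by-case exponent excesses $0$, $6a_1+6a_2+3m+1$, $6a_1+6a_2+3m+2$, and $12a_1+12a_2+6m+6$ check out, so nothing further is needed.
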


Now we start finding a finite analogue of \eqref{Ali_Schur_GF}. Let $N$ be a non-negative integer. We would like to find all the partitions with the largest part $\leq N$ that are counted by \eqref{Ali_Schur_GF}. For that we need to count how many times a singleton, a 2 modulo 3 pair and a 1 modulo 3 pair can move forward before exceeding $N$ and change our generating functions from reciprocal of a $q$-factorials to the necessary $q$-binomials. 

The largest singleton of the minimal configuration $\pi_{n_1,n_2,m}$, $3(n_1+n_2-1)+2$, can only move $N-[3(n_1+n_2)+3+4(m-1)]$ steps forward before exceeding the imposed bound. Therefore, with the new bound, the motions for the singletons is related with the partitions into $\leq m$ parts, where each part is $\leq N-[3(n_1+n_2-1)+2]$. The generating function for all such partitions is \begin{equation}
\label{singleton_move_GF_BDD} {N-[3(n_1+n_2)+3+4(m-1)] + m \brack m}_q 
\end{equation}
Each forwards movement of a 2 modulo 3 pair gets it 3 units closer to the bound $N$. Then, ignoring the singletons for a second, the largest 2 modulo 3 pair $\underbrace{3(n_1+n_2-2)+2,3(n_1+n_2-1)+2}$ can move at most \[\left\lfloor \frac{N-[3(n_1+n_2-1)+2]}{3} \right\rfloor\] steps forwards before the larger part, $3(n_1+n_2-1)+2$, of the pair goes over the bound on the largest part $N$. Recall \eqref{Num_of_forwards_motions_in_crossing_singletons}: crossing over singletons make these pairs move extra steps forwards. There are $m$ singletons that are greater than the largest pair $\underbrace{3(n_1+n_2-2)+2,3(n_1+n_2-1)+2}$. Hence before reaching the bound this pair would need to cross all of those $m$ singletons, and move an extra 3 steps forwards each time. Therefore, the actual number of steps this pair can take forwards before passing the bound $N$ is \[\left\lfloor \frac{N-[3(n_1+n_2-1)+2]}{3} \right\rfloor - m.\] This shows us that the bounded forwards motion of the 2 modulo 3 pairs is related with partitions into $\leq \lfloor n_2/2\rfloor$ parts each $\leq \lfloor {N-[3(n_1+n_2-1)+2]/3} \rfloor - m$. This implies that the related generating function for this motion (that changes the size by 6 each time) is
\begin{equation}\label{n2_move_GF_BDD}\left[\hspace{-2mm}\begin{array}{c}\left\lfloor \frac{N-[3(n_1+n_2-1)+2]}{3} \right\rfloor - m + \left\lfloor \frac{n_2}{2} \right\rfloor \\ \left\lfloor \frac{n_2}{2}\right\rfloor \end{array}\hspace{-2mm}\right]_{q^6}.
\end{equation}
Finally, Similar to the previous case, forgetting about the the $n_2$ 2 modulo 3 parts and the $m$ singletons, the largest 1 modulo 3 pair, $\underbrace{3(n_1-2)+1,3(n_1-1)+1}$, can move \[\left\lfloor \frac{N-[3(n_1-1)+1]}{3} \right\rfloor\] forwards before $3(n_1-1)+1$ goes over $N$. Including our observations about the extra steps one pair takes while crossing over parts, we see that the actual number of steps forwards that the largest pair can take is \[\left\lfloor \frac{N-[3(n_1-1)+1]}{3} \right\rfloor-m-n_2.\] With that, similar to the previous case, we see that the generating function related to the forwards motions of the $\lfloor n_1/2 \rfloor$ 1 modulo 3 pairs is \begin{equation}\label{n1_move_GF_BDD}
\left[\hspace{-2mm}\begin{array}{c}\left\lfloor \frac{N-[3(n_1-1)+1]}{3} \right\rfloor - m - n_2 + \left\lfloor \frac{n_1}{2} \right\rfloor \\ \left\lfloor \frac{n_1}{2}\right\rfloor \end{array}\hspace{-2mm}\right]_{q^6}.
\end{equation}

Putting \eqref{singleton_move_GF_BDD}, \eqref{n2_move_GF_BDD}, and \eqref{n1_move_GF_BDD} together, we get that \begin{align}
\label{Ali_Schur_BDD_Summand}q^{A(n_1,n_2,m)}&{N-3(n_1+n_2 + m)+1 \brack m}_q \\\nonumber &\times\left[\hspace{-2mm}\begin{array}{c}\left\lfloor \frac{N-[3(n_1-1)+1]}{3} \right\rfloor - m - n_2 + \left\lfloor \frac{n_1}{2} \right\rfloor \\ \left\lfloor \frac{n_1}{2}\right\rfloor \end{array}\hspace{-2mm}\right]_{q^6} \left[\hspace{-2mm}\begin{array}{c}\left\lfloor \frac{N-[3(n_1+n_2-1)+2]}{3} \right\rfloor - m + \left\lfloor \frac{n_2}{2} \right\rfloor \\ \left\lfloor \frac{n_2}{2}\right\rfloor \end{array}\hspace{-2mm}\right]_{q^6} 
\end{align} is the generating function for the number of partitions that satisfies the gap conditions of Theorem~\ref{Schur_THM} that can be constructed from the minimal configuration $\pi_{n_1,n_2,m}$ with the extra bound on the largest part $\leq N$. Summing \eqref{Ali_Schur_BDD_Summand} over $n_1,\ n_2,$ and $m$ yields the following theorem.

\begin{theorem}\label{Ali_Schur_GF_BDD_THM} For any non-negative integer $N$, the expression 
\begin{align}\label{Ali_Schur_GF_BDD}
\sum_{n_1,n_2,m\geq0 }x^{n_1+n_2+m} &q^{A(n_1,n_2,m)}{N-3(n_1+n_2 + m)+1 \brack m}_q \\\nonumber &\times\left[\hspace{-2mm}\begin{array}{c}\left\lfloor \frac{N-[3(n_1-1)+1]}{3} \right\rfloor - m - n_2 + \left\lfloor \frac{n_1}{2} \right\rfloor \\ \left\lfloor \frac{n_1}{2}\right\rfloor \end{array}\hspace{-2mm}\right]_{q^6} \left[\hspace{-2mm}\begin{array}{c}\left\lfloor \frac{N-[3(n_1+n_2-1)+2]}{3} \right\rfloor - m + \left\lfloor \frac{n_2}{2} \right\rfloor \\ \left\lfloor \frac{n_2}{2}\right\rfloor \end{array}\hspace{-2mm}\right]_{q^6}
\end{align} where $A(n_1,n_2,m)$ is defined as in Theorem~\ref{Schur_poly_THM}, is the generating function for the number of partitions that satisfy the gap conditions of Theorem~\ref{Schur_THM} with the extra condition that each part is $\leq N$, where the exponent of x counts the number of parts.
\end{theorem}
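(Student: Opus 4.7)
The plan is to refine the bijective argument used to prove Theorem~\ref{Ali_Schur_GF_THM} by adding the constraint that the largest part of the constructed partition is $\leq N$. Since the three families of forward motions (singleton motions, $2$-mod-$3$ pair motions, $1$-mod-$3$ pair motions) are applied in sequence and each is parametrized by an independent weakly-increasing integer sequence, once the minimal configuration $\pi_{n_1,n_2,m}$ is fixed, the bounded enumeration decouples into three factors that can be computed separately. It then suffices to replace each reciprocal $q$-factorial in Theorem~\ref{Ali_Schur_GF_THM} by its bounded $q$-binomial analogue and sum over all admissible $n_1, n_2, m$.

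First I would handle the singletons. The largest singleton in $\pi_{n_1,n_2,m}$ is $3(n_1+n_2)+3+4(m-1)$, so the sequence $0 \leq r_1 \leq \cdots \leq r_m$ recording the forward motions must satisfy $r_m \leq N - [3(n_1+n_2)+3+4(m-1)]$. Such sequences are in bijection with partitions into at most $m$ parts, each bounded by $N - 3(n_1+n_2) - 4m + 1$, whose generating function is the $q$-binomial \eqref{singleton_move_GF_BDD}, matching the first factor in \eqref{Ali_Schur_BDD_Summand}.

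Next I would handle the $2$-mod-$3$ pairs: by \eqref{Num_of_forwards_motions_in_crossing_singletons}, each forward motion of the largest such pair shifts its top element by $3$, with an extra $3$ for every singleton it crosses. Since exactly $m$ singletons sit above this pair in $\pi_{n_1,n_2,m}$, the maximum number of motions before the top exceeds $N$ is $\lfloor (N-[3(n_1+n_2-1)+2])/3\rfloor - m$; each contributes $6$ to the total size, which produces the $q^6$-binomial \eqref{n2_move_GF_BDD}. The $1$-mod-$3$ pairs are analogous, except that the pair must cross the block of $n_2$ consecutive $2$-mod-$3$ parts via \eqref{jump_over_2mod3_chain} in addition to the $m$ singletons, which introduces the additional $-n_2$ term and yields \eqref{n1_move_GF_BDD}. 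Multiplying these three bounded factors by the minimal-configuration weight $x^{n_1+n_2+m} q^{A(n_1,n_2,m)}$ and summing over $n_1, n_2, m \geq 0$ produces \eqref{Ali_Schur_GF_BDD}.

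The main obstacle will be auditing the bound computations, in particular justifying that the three motion families can independently attain their respective maxima without interference. Concretely, one must check that if the singletons are moved all the way up to their bound and the $2$-mod-$3$ (respectively $1$-mod-$3$) pair is subsequently moved, the ``extra $3$ per crossing'' rule of \eqref{Num_of_forwards_motions_in_crossing_singletons} correctly accounts for the post-motion positions of the singletons (respectively singletons and $2$-mod-$3$ chain), so that the pair bound $\lfloor\,\cdot\,/3\rfloor - m$ (respectively $\lfloor\,\cdot\,/3\rfloor - m - n_2$) is sharp. Given the order of motions prescribed in Section~\ref{Sec_Combin} — singletons first, then $2$-mod-$3$ pairs, then $1$-mod-$3$ pairs — this is a direct, if careful, verification case-by-case against \eqref{pair_free_motion}--\eqref{jump_over_2mod3_chain}.
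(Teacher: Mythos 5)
Your proposal is correct and follows essentially the same route as the paper: the paper likewise bounds each of the three motion families separately (largest singleton at $3(n_1+n_2)+3+4(m-1)$ giving \eqref{singleton_move_GF_BDD}, the $2$-mod-$3$ pair bound reduced by $m$ for singleton crossings giving \eqref{n2_move_GF_BDD}, the $1$-mod-$3$ pair bound reduced by $m+n_2$ giving \eqref{n1_move_GF_BDD}), multiplies by $q^{A(n_1,n_2,m)}$, and sums over $n_1,n_2,m$. The subtlety you flag about the ``extra $3$ per crossing'' rule is exactly the accounting the paper performs via \eqref{Num_of_forwards_motions_in_crossing_singletons}.
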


One direct corollary of Theorem~\ref{Ali_Schur_GF_BDD_THM} is the interpretation of the left-hand side of \eqref{Schur_poly_EQN} when $N\mapsto 3N-1$. 

\begin{corollary}\label{Cor1}For any positive integer $N$, and $\N:= N-n_1-n_2-m$, the expression \[\sum_{m,n_1,n_2\geq 0} q^{A(n_1,n_2,m)} {3\N\brack m}_q {\N +\left\lfloor \frac{n_1}{2}\right\rfloor \brack \left\lfloor \frac{n_1}{2}\right\rfloor}_{q^6} {\N +\left\lfloor \frac{n_2}{2}\right\rfloor \brack \left\lfloor \frac{n_2}{2}\right\rfloor}_{q^6},\]where $A(n_1,n_2,m)$ is defined as in Theorem~\ref{Schur_poly_THM}, is the generating function for the number of partitions that satisfy the gap conditions of Theorem~\ref{Schur_THM} with the extra condition that each part is $\leq 3N-1$.
\end{corollary}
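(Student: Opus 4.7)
The plan is to obtain Corollary~\ref{Cor1} as a direct specialization of Theorem~\ref{Ali_Schur_GF_BDD_THM}. Since the corollary does not track the number of parts, I would set $x=1$ in the sum \eqref{Ali_Schur_GF_BDD} and replace $N$ by $3N-1$, then simplify each of the three upper binomial indices using the elementary identity $\lfloor (3k+c)/3\rfloor = k$ valid for any integer $k$ and any $c\in\{0,1,2\}$. No other ingredient is needed beyond the already proven Theorem~\ref{Ali_Schur_GF_BDD_THM}.

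More concretely, first I would treat the ordinary $q$-binomial: the upper index $N-3(n_1+n_2+m)+1$ becomes $3N-1-3(n_1+n_2+m)+1 = 3(N-m-n_1-n_2) = 3\N$, so the factor in \eqref{Ali_Schur_GF_BDD} collapses to ${3\N\brack m}_q$. For the $q^6$-binomial involving $n_1$, the floor argument is $(3N-1-[3(n_1-1)+1])/3 = (3N-3n_1+1)/3$, whose floor equals $N-n_1$; subtracting $m+n_2$ and adding $\lfloor n_1/2\rfloor$ produces $\N+\lfloor n_1/2\rfloor$. For the $q^6$-binomial involving $n_2$, the floor argument simplifies to $(3N-1-[3(n_1+n_2-1)+2])/3 = N-n_1-n_2$ exactly, and subtracting $m$ and adding $\lfloor n_2/2\rfloor$ likewise produces $\N+\lfloor n_2/2\rfloor$.

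With these three simplifications, the specialized \eqref{Ali_Schur_GF_BDD} coincides term-by-term with the sum stated in the corollary, and Theorem~\ref{Ali_Schur_GF_BDD_THM} identifies this sum with the generating function for Schur-admissible partitions whose largest part is at most $3N-1$. There is no real obstacle here; the only point needing care is the bookkeeping of the floor evaluations, and these work out cleanly precisely because the specific shift $N\mapsto 3N-1$ chosen in the statement makes each numerator inside the floors either a multiple of $3$ or exactly $1$ greater than one.
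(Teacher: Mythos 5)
Your proposal is correct and matches the paper's own (very brief) derivation: the paper obtains Corollary~\ref{Cor1} precisely by substituting $N\mapsto 3N-1$ into Theorem~\ref{Ali_Schur_GF_BDD_THM}, and your floor-function bookkeeping supplies exactly the details the paper leaves implicit. The three simplifications you compute (upper index $3\N$ for the $q$-binomial, and $\N+\lfloor n_1/2\rfloor$, $\N+\lfloor n_2/2\rfloor$ for the two $q^6$-binomials) are all accurate.
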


On the other hand, Andrews \cite{Andrews_Capparelli} interpreted the right-hand side of \eqref{Schur_poly_EQN} as the same generating function in the interpretation of Corollary~\ref{Cor1}. This is also proves the validity of Theorem~\ref{Ali_Schur_GF_THM} for positive values of $N$, this time using only the combinatorial constructions. In \cite[(3.9), pg. 147]{Andrews_Capparelli}, Andrews also shows that the right-hand side sum converges to the generating function for the number of partitions into distinct parts $\pm 1$ mod 3: \[(-q,-q^2;q^3)_\infty.\] This shows that after taking limits $N\rightarrow\infty$ of \eqref{Schur_poly_EQN}, and using \eqref{Binom_limit} as needed, we have \[\sum_{n_1,n_2,m\geq 0} \frac{x^{n_1+n_2+m}q^{A(n_1,n_2,m)}}{(q^6;q^6)_{\lfloor n_1/2\rfloor}(q^6;q^6)_{\lfloor n_2/2\rfloor}(q)_m} = (-q,-q^2;q^3)_\infty,\] which is the analytic version of the Schur's theorem (Theorem~\ref{Schur_THM}). This shows that the polynomial identity \eqref{Schur_poly_EQN} (keeping the interpretation, Theorem~\ref{Ali_Schur_GF_BDD_THM} in mind) implies Theorem~\ref{Schur_THM}.

\section{Some Implications of Theorem~\ref{Schur_poly_THM}}\label{Sec_Tri}

We start by sending $q\mapsto 1/q$ in \eqref{Schur_poly_EQN} followed by the use of \eqref{Binom_1_over_q} and multiplying both sides with $q^{3N^2/2}$. This yields the equivalent formula
\begin{equation}
\label{Schur_poly_Dual_EQN}\sum_{m,n_1,n_2\geq 0} q^{B(n_1,n_2,m,N)-A(n_1,n_2,m)} {3\N\brack m}_q {\N +\left\lfloor \frac{n_1}{2}\right\rfloor \brack \left\lfloor \frac{n_1}{2}\right\rfloor}_{q^6} {\N +\left\lfloor \frac{n_2}{2}\right\rfloor \brack \left\lfloor \frac{n_2}{2}\right\rfloor}_{q^6}
= \sum_{j=-N}^N q^{\frac{j}{2}} T_0{N;q^3\choose j},
\end{equation}
where $A(n_1,n_2,m)$ is as in \eqref{A_weight}, $\N=N-n_1=n_2=m$, and
\begin{equation}
\label{B_weight} B(n_1,n_2,m,N) = \frac{3N^2}{2} - (3\N-m)m - 6\N\left( \left\lfloor \frac{n_1}{2}\right\rfloor+ \left\lfloor \frac{n_2}{2}\right\rfloor\right).
\end{equation}

Note that the sides in \eqref{Schur_poly_Dual_EQN} are not polynomials but multiplying both sides with $q^{N/2}$ is enough to make them polynomials. After multiplying both sides of \eqref{Schur_poly_Dual_EQN} by $q^{N/2}$, writing the definition of \eqref{RoundqTrinomial} in for the right-hand side of \eqref{Schur_poly_Dual_EQN} and using \eqref{Factorial_1_over_q} multiple times we see that \begin{equation}
\label{T0_open} \sum_{j=-\infty}^{\infty} q^{\frac{N+j}{2}} T_0{N;q^3\choose j} = \sum_{k,l\geq 0} \frac{q^{k+\frac{l(3l+1)}{2}} (q^3;q^3)_N}{(q^3;q^3)_{N-k-l}(q^3;q^3)_k(q^3;q^3)_l}, 
\end{equation} after simple changes of variables. We use \eqref{Factorial_difference} for the term $(q^3;q^3)_{(N-k)-l}$ to separate the difference of the variable $l$. This way we end up with the expression \[\sum_{k\geq 0} q^k {N\brack k}_{q^3} \sum_{l\geq 0}\frac{(q^{-3(N-k)};q^3)_l}{(q^3;q^3)_l}\left(-q^{3(N-k)+2}\right)^l.\] The inner sum can be summed using the $q$-binomial theorem \cite[II.4, p 354]{Gasper_Rahman}, and we get \begin{equation}
\label{T_0_to_binomials} \sum_{j=-\infty}^{\infty} q^{\frac{N+j}{2}} T_0{N;q^3\choose j} = \sum_{k\geq0} q^k{N\brack k}_{q^3} (-q^2;q^3)_{N-k}.
\end{equation} Not only that, \eqref{T_0_to_binomials} with the use of \cite[II.1, p 354]{Gasper_Rahman} on the right-hand side, yields \begin{equation}\label{T_0_limit}\lim_{N\rightarrow\infty}  \sum_{j=-\infty}^{\infty} q^{\frac{N+j}{2}} T_0{N;q^3\choose j} = \frac{1}{(q^2;q^3)_\infty(q;q^6)_\infty}.
\end{equation}

To evaluate the $N\rightarrow\infty$ limit on the left-hand side of \eqref{Schur_poly_Dual_EQN} with the extra $q^{N/2}$, one first needs to make a change of summation variables and rewrite the $q$-factor. We would like to use $y=\N$ as our summation variable instead of $n_2$, but the parity of $N$ must be kept in check to correctly identify the exponent of the $q$-factor in this case. Let $r(a,b)$ be the remainder of the division $a \div b$, for $a,b\in \mathbb{N}$. After the change of variables, the left-hand side of \eqref{Schur_poly_Dual_EQN} multiplied with an extra $q^{N/2}$ becomes \[\sum_{m,n_1,y\geq 0} q^{Q(m,n_1,y,N)} {3y\brack m}_q {y +\left\lfloor \frac{n_1}{2}\right\rfloor \brack y}_{q^6} \left[\hspace{-2mm}\begin{array}{c}y +\left\lfloor \frac{N-m-n_1-y}{2}\right\rfloor \\ y\end{array}\hspace{-2mm}\right]_{q^6},\] where \[Q(m,n_1,y,N) = {m\choose 2} +\frac{y(3y+1)}{2} +n_1 + 3y\, r(N+m+y,2) + 6y\,r(n_1,2)\, r(N+m+y+1,2).\] Then, by taking the limit $N\rightarrow\infty$ for odd and even $N$ and using \eqref{T_0_limit} we get the following theorem.

\begin{theorem} Let $t=1,\ 2$, then
\begin{equation}
\sum_{m,n_1,y\geq 0} \frac{q^{Q_t(m,n_1,y)}}{(q^6;q^6)_y} {3y\brack m}_q {y +\left\lfloor \frac{n_1}{2}\right\rfloor \brack y}_{q^6}= \frac{1}{(q^2;q^3)_\infty(q;q^6)_\infty},
\end{equation}
where
\begin{equation}
Q_t(m,n_1,y) = {m\choose 2} +\frac{y(3y+1)}{2} +n_1 + 3y\, r(m+y+t,2) + 6y\,r(n_1,2)\, r(m+y+1+t,2)
\end{equation}
\end{theorem}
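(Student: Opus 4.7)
The plan is to obtain the theorem by taking the limit $N \to \infty$ in the dual polynomial identity \eqref{Schur_poly_Dual_EQN} (multiplied through by $q^{N/2}$, as in the exposition preceding the theorem) along each of the two residue classes of $N$ modulo $2$. The right-hand side limit has already been evaluated in \eqref{T_0_limit} and is independent of the parity of $N$, so both cases $t = 1$ and $t = 2$ produce the same product $1/((q^2;q^3)_\infty(q;q^6)_\infty)$. All the remaining work lies on the left-hand side.

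First I would perform the change of summation variable $y = \N = N - m - n_1 - n_2$, which replaces $n_2$ by $N - m - n_1 - y$. Under this substitution the $q^6$-binomial carrying $\lfloor n_2/2\rfloor$ becomes $\left[\begin{array}{c} y + \lfloor (N-m-n_1-y)/2\rfloor \\ y \end{array}\right]_{q^6}$; for fixed $(m,n_1,y)$ this tends to $1/(q^6;q^6)_y$ as $N \to \infty$ by \eqref{Binom_limit}. The other two $q$-binomials ${3y \brack m}_q$ and ${y+\lfloor n_1/2\rfloor \brack y}_{q^6}$ depend only on $(m,n_1,y)$ and pass to the limit unchanged.

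The central algebraic step is the simplification of the combined exponent $B(n_1,n_2,m,N) - A(n_1,n_2,m) + N/2$ into the form $Q(m,n_1,y,N)$ displayed just before the theorem. The parity indicators $r(N+m+y,2)$ and $r(N+m+y+1,2)$ originate from applying the identity $2\lfloor k/2\rfloor = k - r(k,2)$ to the floor term $\lfloor n_2/2\rfloor$ in $B$, after substituting $n_2 = N-m-n_1-y$; the dependence on $n_1$ of $r(n_2,2)$ is folded into the factor $r(n_1,2)$, producing the two residue expressions shown. Once this form is in hand, restricting to $N$ odd replaces $r(N+m+y,2)$ by $r(m+y+1,2)$ and $r(N+m+y+1,2)$ by $r(m+y+2,2)$, giving $Q_1$; restricting to $N$ even gives $Q_2$ in the same way.

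The main obstacle is the bookkeeping in the exponent simplification: one must track every quadratic, linear, and constant contribution in $n_2 = N-m-n_1-y$ arising from $A$, $B$, and the extra $N/2$, and verify that all $N$-dependence cancels except for the two residue terms. A secondary but routine point is justifying the termwise limit; this follows because, for fixed $(m,n_1,y)$, the summand is a polynomial in $q$ and the exponent $Q_t(m,n_1,y)$ grows quadratically in $y$ and linearly in $m,n_1$, so the triple sum converges as a formal power series in $q$.
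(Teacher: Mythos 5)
Your proposal follows essentially the same route as the paper: substitute $y=\N$ for $n_2$ in the dual identity \eqref{Schur_poly_Dual_EQN} multiplied by $q^{N/2}$, identify the resulting exponent $Q(m,n_1,y,N)$ with its parity indicators, let $N\to\infty$ along each residue class of $N$ modulo $2$ (the third binomial tending to $1/(q^6;q^6)_y$ via \eqref{Binom_limit}), and use \eqref{T_0_limit} for the right-hand side. The correspondence $N$ odd $\leftrightarrow t=1$, $N$ even $\leftrightarrow t=2$ is exactly as you describe, so the argument is correct and matches the paper's.
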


Recall that Warnaar \cite[(10), pg 2516]{Warnaar} proved the following summation formula.
\begin{equation}
\label{T0_BL} \sum_{i\geq 0} q^{\frac{i^2}{2}} {L\brack i}_q T_0{i;q\choose a} = q^{\frac{a^2}{2}} {2L\brack L-a}_q.
\end{equation}
This can be applied to the right-side of \eqref{Schur_poly_Dual_EQN} to get the following theorem.

\begin{theorem}\label{Summation_Formula_THM}Let $\N=N-m-n_1-n_2$, for any non-negative integer $M$ we have
\begin{align}
\label{Summation_formula}\sum_{N,m,n_1,n_2\geq 0} q^{ \frac{3N^2}{2}+B(n_1,n_2,m,N)-A(n_1,n_2,m)}&{3\N\brack m}_q {M\brack N}_{q^3} {\N +\left\lfloor \frac{n_1}{2}\right\rfloor \brack \left\lfloor \frac{n_1}{2}\right\rfloor}_{q^6} {\N +\left\lfloor \frac{n_2}{2}\right\rfloor \brack \left\lfloor \frac{n_2}{2}\right\rfloor}_{q^6}= (-q,-q^2;q^3)_{M},
\end{align}where $A(n_1,n_2,m)$ and $B(n_1,n_2,m,N)$ are defined as in \eqref{A_weight} and \eqref{B_weight}, respectively.
\end{theorem}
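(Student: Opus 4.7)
My plan is to derive \eqref{Summation_formula} from the dual identity \eqref{Schur_poly_Dual_EQN} by summing against a suitable $q^3$-binomial kernel. Concretely, I would multiply both sides of \eqref{Schur_poly_Dual_EQN} by $q^{3N^2/2}{M\brack N}_{q^3}$ and sum over $N\ge 0$. On the left this reproduces the left-hand side of \eqref{Summation_formula} term-for-term, so the remaining task is to evaluate
\begin{equation*}
\sum_{N\ge 0}q^{3N^2/2}{M\brack N}_{q^3}\sum_{j=-N}^{N}q^{j/2}\,T_0{N;q^3\choose j}.
\end{equation*}

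Next I would swap the order of summation and apply Warnaar's identity in the $N$-variable. The trinomial $T_0{N;q^3\choose j}$ vanishes whenever $|j|>N$, as is immediate from the support of \eqref{RoundqTrinomial} combined with \eqref{TnqTrinomial}; hence the $j$-sum extends without change to all of $\mathbb{Z}$. The inner sum in $N$ then matches \eqref{T0_BL} exactly under the substitutions $q\mapsto q^3$, $L=M$, $a=j$, producing $q^{3j^2/2}{2M\brack M-j}_{q^3}$. Collecting powers via $j/2+3j^2/2=j(3j+1)/2$ reduces the theorem to the single-sum identity
\begin{equation*}
\sum_{j\in\mathbb{Z}}q^{j(3j+1)/2}{2M\brack M-j}_{q^3}=(-q,-q^2;q^3)_M.
\end{equation*}

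The final step is to recognise this as the finite Jacobi triple product
\begin{equation*}
\sum_{j=-L}^{L}(-z)^j q^{{j\choose 2}}{2L\brack L+j}_q=(z;q)_L(q/z;q)_L,
\end{equation*}
specialised at $q\mapsto q^3$, $L=M$, $z=-q$: then $(-z)^j=q^j$, the exponent becomes $j+3{j\choose 2}=j(3j-1)/2$, the reindexing $j\mapsto -j$ matches the target sum, and the product side collapses to $(-q;q^3)_M(-q^2;q^3)_M$. I do not expect a genuine obstacle here; the only mild subtlety is justifying the extension of the $j$-range before the swap, which the vanishing property above handles immediately, while the finite JTP specialisation is entirely classical.
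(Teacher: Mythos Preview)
Your proposal is correct and follows the paper's proof essentially step for step: multiply \eqref{Schur_poly_Dual_EQN} by $q^{3N^2/2}{M\brack N}_{q^3}$, sum over $N$, swap the order of summation on the right, and apply Warnaar's identity \eqref{T0_BL}. The only cosmetic difference is in the evaluation of the remaining single sum: the paper applies the finite $q$-binomial theorem \cite[(3.3.6)]{Theory_of_Partitions} to obtain $q^{M(3M+1)/2}(-q^{1-3M};q^3)_{2M}$ and then simplifies, whereas you invoke the equivalent finite Jacobi triple product directly---these are the same classical identity packaged two ways.
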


\begin{proof}
We sum both sides of \eqref{Schur_poly_Dual_EQN} over $N$ from 0 to $M$ after multiplying the summand with \[q^{\frac{3N^2}{2}}{M\brack N}_{q^3}.\] This gives the left-hand side of \eqref{Summation_formula}. For the right-hand side of the formula, we interchange the order of summations, use \eqref{T0_BL} followed by the summation formula \cite[(3.3.6). p. 36]{Theory_of_Partitions}. This yields \[q^{\frac{(3M+1)M}{2}} (-q^{1-3M};q^3)_{2M},\] which after basic simplifications is equal to the right-hand side of the equation \eqref{Summation_formula}. 
\end{proof}

The limit $M\rightarrow \infty$ of \eqref{Summation_formula} is much more straightforward than the limit $n\rightarrow\infty$. By employing \eqref{Binom_limit}, we get the following corolary of Theorem~\ref{Summation_Formula_THM}.

\begin{corollary}\[\sum_{N,m,n_1,n_2\geq 0} \frac{q^{ \frac{3N^2}{2}+B(n_1,n_2,m,N)-A(n_1,n_2,m)}}{(q^3;q^3)_N}{3\N\brack m}_q {\N +\left\lfloor \frac{n_1}{2}\right\rfloor \brack \left\lfloor \frac{n_1}{2}\right\rfloor}_{q^6} {\N +\left\lfloor \frac{n_2}{2}\right\rfloor \brack \left\lfloor \frac{n_2}{2}\right\rfloor}_{q^6}= (-q,-q^2;q^3)_{\infty}\]
where $A(n_1,n_2,m)$ and $B(n_1,n_2,m,N)$ are defined as in \eqref{A_weight} and \eqref{B_weight}, respectively.
\end{corollary}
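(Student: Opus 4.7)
The plan is to derive the corollary as the straightforward $M\to\infty$ limit of Theorem~\ref{Summation_Formula_THM}. On the right-hand side of \eqref{Summation_formula}, the finite product $(-q,-q^2;q^3)_M$ tends to $(-q,-q^2;q^3)_\infty$ by the very definition of the infinite $q$-Pochhammer symbol, valid for $|q|<1$. On the left-hand side, the only factor in the summand that depends on $M$ is the $q$-binomial coefficient ${M\brack N}_{q^3}$; applying \eqref{Binom_limit} with $q$ replaced by $q^3$ gives $\lim_{M\to\infty}{M\brack N}_{q^3}=1/(q^3;q^3)_N$, producing exactly the denominator that appears in the stated identity.

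The only issue worth addressing is the interchange of the limit with the quadruple summation over $N,m,n_1,n_2$. This is done by working in the $q$-adic topology (equivalently, in the ring of formal power series in $q$). By construction, the identity \eqref{Schur_poly_Dual_EQN} becomes a polynomial identity once multiplied by $q^{N/2}$, so the exponent $\tfrac{N}{2}+B(n_1,n_2,m,N)-A(n_1,n_2,m)$ is a non-negative integer on every term with a nonzero summand. The additional factor $q^{3N^2/2}$ present in \eqref{Summation_formula} contributes a further $N(3N-1)/2$ to each exponent, so the $q$-valuation of each summand grows quadratically in $N$ and without bound in $m$, $n_1$, $n_2$ (through the $q$-binomial denominators in $A$ and $B$). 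Consequently, for every fixed degree $d$, only finitely many quadruples $(N,m,n_1,n_2)$ contribute to the coefficient of $q^d$ on the left-hand side, so the termwise limit equals the limit of the sum.

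Putting these two observations together yields the claimed identity. The main obstacle—if one can call it that—is simply to verify the $q$-adic convergence bookkeeping; there is no genuine analytic difficulty beyond what is already implicit in Theorem~\ref{Summation_Formula_THM}.
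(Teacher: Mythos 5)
Your proposal is correct and follows exactly the paper's (very brief) argument: take $M\to\infty$ in Theorem~\ref{Summation_Formula_THM}, apply \eqref{Binom_limit} with $q\mapsto q^3$ to the factor ${M\brack N}_{q^3}$, and note that the right-hand side tends to the infinite product. The extra $q$-adic bookkeeping you supply to justify interchanging the limit with the sum is a welcome elaboration of what the paper leaves implicit, though the cleaner reason only finitely many quadruples contribute to each power of $q$ is that the summand vanishes unless $m+n_1+n_2\leq N$ while the exponent grows quadratically in $N$.
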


Theorem~\ref{Schur_poly_THM} (and the equation \eqref{Schur_poly_Dual_EQN}) and Theorem~\ref{Summation_Formula_THM} also yield some intriguing combinatorial corollaries at the $q=1$ level.

\begin{corollary} For some non-negative integer $M$, $\N:=N-n_1+n_2-m$ and $\mathcal{M} := M-n_1-n_2-m$, we have
\begin{align}
\label{q=1_1}\sum_{m,n_1,n_2\geq 0} {3\mathcal{M}\choose m} {\mathcal{M} +\left\lfloor \frac{n_1}{2}\right\rfloor \choose \mathcal{M}} {\mathcal{M} +\left\lfloor \frac{n_2}{2}\right\rfloor \choose\mathcal{M}} &= 3^M,
\intertext{and}
\label{q=1_2}\sum_{N,m,n_1,n_2\geq 0} {M\choose N}{3\N\choose m} {\N +\left\lfloor \frac{n_1}{2}\right\rfloor \choose \N} {\N +\left\lfloor \frac{n_2}{2}\right\rfloor \choose \N}&= 4^{M}.
\end{align}
\end{corollary}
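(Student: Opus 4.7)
\smallskip

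\noindent\textbf{Proof proposal.} Both identities should follow by specializing $q\to 1$ in polynomial identities already established earlier in the paper, so the plan is simply to track what each piece of the identity becomes and to recognize the resulting elementary sums.

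For \eqref{q=1_1} the plan is to set $q=1$ directly in Theorem~\ref{Schur_poly_THM} (with the dummy $N$ renamed to $M$). The exponent $q^{A(n_1,n_2,m)}$ collapses to $1$, each $q$-binomial coefficient ${a+b\brack b}_q$ becomes the ordinary binomial coefficient $\binom{a+b}{b}$, and one uses $\binom{\mathcal M+\lfloor n_i/2\rfloor}{\lfloor n_i/2\rfloor}=\binom{\mathcal M+\lfloor n_i/2\rfloor}{\mathcal M}$ to match the statement; this reproduces the left-hand side of \eqref{q=1_1}. For the right-hand side one evaluates $\sum_{j=-N}^{N}{N;j;1\choose j}_2$. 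From the defining sum \eqref{RoundqTrinomial} at $q=1$, the refined trinomial ${N;j;1\choose j}_2=\sum_{k\geq 0}\binom{N}{k}\binom{N-k}{k+j}$ is the classical trinomial coefficient, namely the coefficient of $x^{N+j}$ in $(1+x+x^2)^N$. Summing over $-N\le j\le N$ picks up every coefficient of that polynomial, so the sum equals $(1+1+1)^N=3^N$, which proves \eqref{q=1_1} after relabeling $N\mapsto M$.

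For \eqref{q=1_2} the plan is analogous: specialize Theorem~\ref{Summation_Formula_THM} at $q=1$. On the left the $q$-weight $q^{3N^2/2+B(n_1,n_2,m,N)-A(n_1,n_2,m)}$ becomes $1$ and each of the four $q$-binomials turns into the corresponding ordinary binomial, yielding the left-hand side of \eqref{q=1_2}. On the right one needs $(-q,-q^2;q^3)_M$ at $q=1$. Writing the Pochhammer explicitly,
\[
(-q;q^3)_M(-q^2;q^3)_M=\prod_{k=0}^{M-1}(1+q^{1+3k})(1+q^{2+3k}),
\]
which at $q=1$ equals $2^M\cdot 2^M=4^M$, giving the required right-hand side.

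The only step that is not purely mechanical is the trinomial-sum evaluation on the right-hand side of Theorem~\ref{Schur_poly_THM} at $q=1$, but this is a standard generating-function argument and I expect no serious obstacle. No further machinery (Bailey-type lemma, induction, \textit{etc.}) is needed — both corollaries are clean $q=1$ shadows of the polynomial and summation identities already proved, and the writeup is essentially a matter of noting the above two reductions.
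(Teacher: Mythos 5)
Your proposal is correct and matches the paper's own argument: equation \eqref{q=1_1} is obtained by setting $q=1$ in Theorem~\ref{Schur_poly_THM} and evaluating $\sum_{j=-N}^{N}{N;j;1\choose j}_2$ via the trinomial generating function $(x^{-1}+1+x)^N$ at $x=1$, and \eqref{q=1_2} is the $q=1$ specialization of Theorem~\ref{Summation_Formula_THM}, exactly the first of the two routes the paper indicates (the other being $\sum_N\binom{M}{N}3^N=4^M$). No gaps; the writeup is, if anything, more explicit than the paper's.
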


\begin{proof}The equation \eqref{q=1_2} is a clear consequence of \eqref{Summation_formula}, or one can get it from \eqref{q=1_1} as it is the classical binomial theorem. For the equation \eqref{q=1_1}, one only needs to recall that \[\sum_{j=-N}^N x^j {N;j;1\choose j}_2 = (x^{-1} + 1+ x)^N,\] and set $x$ to 1.
\end{proof}

\section{Acknowledgments}
The author would like to thank Karl Mahlburg for bringing \cite{Kagan3} to our attention and for his interest. The author would also like to thank Alexander Berkovich for the stimulating discussion and his suggestions on the manuscript.

Research of the author is supported by the Austrian Science Fund FWF, SFB50-07 and SFB50-09 Projects.

\end{document}